\newtheorem{theorem}{Theorem}
  \newtheorem{lemma}[theorem]{Lemma}
  \newtheorem{conjecture}[theorem]{Conjecture}
  \newtheorem{proposition}[theorem]{Proposition}
\newcommand{\C}{\mathcal{C}}
\newcommand{\G}{\mathcal{G}}
\newcommand{\N}{\mathbb{N}}
\title{Gaps in the cycle spectrum of 3-connected cubic planar graphs}
\author{Martin Merker\footnote{Department of Applied Mathematics and Computer Science, Technical University of Denmark, DK-2800 Lyngby, Denmark. E-mail address: marmer@dtu.dk. The author was supported by the Danish Council for Independent Research, Natural Sciences, grant DFF-8021-00249, AlgoGraph.}}
\date{}
\begin{document}

\maketitle

\begin{abstract}
We prove that, for every natural number $k$, every sufficiently large 3-connected cubic planar graph has a cycle whose length is in $[k,2k+9]$. We also show that this bound is close to being optimal by constructing, for every even $k\geq 4$, an infinite family of 3-connected cubic planar graphs that contain no cycle whose length is in $[k,2k+1]$.
\end{abstract}

\section{Introduction}

The \emph{cycle spectrum} of a graph $G$, denoted $\C (G)$, is the set of lengths of cycles in $G$. The \emph{circumference} of a graph is the length of its longest cycle. The size of the cycle spectrum has been studied for many different graph classes, in particular for graphs of large minimum degree and Hamiltonian graphs. For example, Sudakov and Verstraëte~\cite{sudakov} showed that if $G$ has average degree $d$ and girth $g$, then $|\C (G)| \geq \Omega (d^{\lfloor \frac{g-1}{2}\rfloor})$, proving a conjecture by Erd\H{o}s~\cite{erdos2}. Milans et al.~\cite{milans} showed that if $G$ is Hamiltonian, then $|\C (G)| \geq \sqrt{p} - \frac{1}{2}\ln{p}-1$ where $p= |E(G)|-|V(G)|$.\\
There are various problems in structural graph theory which can be rephrased in terms of the cycle spectrum of graphs with certain properties. A typical question is to decide for a class of graphs $\G$ and $A\subset \N$ if every graph in $\G$ contains a cycle whose length is in~$A$. For example, Grötzsch's theorem~\cite{grotzsch} states that $3\in \C (G)$ for every planar graph~$G$ of chromatic number 4, while Steinberg's Conjecture states that $\C (G) \cap \{4,5\} \neq \emptyset$. Steinberg's Conjecture has been disproved~\cite{cohen-addad} but it is still open if $\C (G) \cap \{4,5,6\} \neq \emptyset$ for every planar graph $G$ of chromatic number 4. There are also many theorems and open problems where $A$ is an infinite set. For example, the Erd\H{o}s-Gy{\'a}rf{\'a}s Conjecture~\cite{erdos} states that if $G$ is a graph of minimum degree at least~3, then $\C (G)\cap \{2^k:k\in \N\} \neq \emptyset$. For $m,k\in \N$, let $A(m,k)$ denote the set of natural numbers congruent to $m$ modulo~$k$. Very recently, Gao et. al~\cite{gao} showed that $\C (G)\cap A(2m,k) \neq \emptyset$ for every graph $G$ of minimum degree at least $k+1$, proving a conjecture by Thomassen~\cite{thomassen}. Lyngsie and Merker~\cite{lyngsie} showed that $\C (G)\cap A(m,2k+1) \neq \emptyset$ if $G$ is a sufficiently large 3-connected cubic graph.\\
In this article we focus on the case where $A$ is an interval and $\G$ is a family of 2-connected simple graphs. Let $a,b\in \N$ with $3\leq a \leq b$. We say $[a,b]$ is a \emph{gap} of $G$ if $\C(G) \cap [a,b] = \emptyset$ and $G$ has circumference greater than $b$. Note that if $[a,b]$ is a gap of $G$ then also every interval contained in $[a,b]$ is a gap of $G$. We investigate which intervals occur as gaps of graphs in $\G$. If $\G$ contains graphs of arbitrarily large girth, then every interval is a gap of some graph in $\G$. To avoid this, we focus on 2-connected cubic planar graphs.

\begin{proposition}\label{prop}
Let $a,b\in \N$ with $3\leq a\leq b$. The interval $[a,b]$ is a gap of some 2-connected cubic planar graph if and only if $a=3$, $b\leq 4$ or $a=4$, $b\leq 9$ or $a\geq 5$.
\end{proposition}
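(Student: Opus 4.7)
The plan is to prove necessity and sufficiency separately. Necessity rests on Euler's formula, which gives $\sum_\ell (6-\ell)f_\ell = 12$ for every connected cubic planar graph and hence a face of length at most $5$. When $a = 3$, the boundary of such a face yields a cycle of length at most $5$, so $b \leq 4$. When $a = 4$, the hypothesis forbids $4$-cycles; a cubic degree count shows that two triangles sharing a vertex must share an edge, and two triangles sharing an edge yield a $4$-cycle, so the triangles of $G$ are pairwise vertex-disjoint. Contracting each triangle of $G$ to a single vertex then produces a graph $G'$ which is simple (parallel edges between two ``triangle-vertices'' create a $4$-cycle via two triangle arcs, parallel edges between a triangle-vertex and a non-triangle vertex create an extra triangle through the latter), cubic and planar.

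Applying Euler to $G'$ yields a cycle $C'$ of length $\ell' \leq 5$. Let $t$ be the number of triangle-vertices on $C'$. At each such vertex, $C'$ attaches at two distinct vertices of the corresponding triangle (each triangle vertex has a unique external edge), and the path inside the triangle joining them may independently be chosen of length $1$ or $2$; so $C'$ lifts to cycles of $G$ of every length in $[\ell'+t,\,\ell'+2t]$. Since $t\leq \ell'\leq 5$, we have $\ell'+t\leq 10$, so for $b \geq 10$ some lifted cycle lies in $[4,b]$, contradicting the definition of a gap.

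For sufficiency I would give three constructions. The dodecahedron (cubic, $3$-connected, planar, girth $5$, circumference $20$) realises $[3,4]$. The truncated dodecahedron, obtained by replacing every vertex of the dodecahedron with a triangle, realises $[4,9]$: reversing the contraction analysis above, its cycles are either the added triangles (length $3$) or correspond to a cycle $C'$ of length $\ell' \geq 5$ in the dodecahedron with $t = \ell'$, hence lift to length in $[2\ell', 3\ell'] \subseteq [10, 60]$. For $a \geq 5$ and any $b \geq a$ I would arrange $n \geq \lceil (b+1)/3\rceil$ copies of $K_4$ minus an edge in a cyclic chain, joining consecutive copies by a single edge between their degree-$2$ vertices (each degree-$2$ vertex used exactly once). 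The resulting graph is $2$-connected, cubic and planar; cycles inside a single gadget have length $3$ or $4$; and any cycle using an inter-gadget edge must use all $n$ such edges and traverse each gadget via a path of length $2$ or $3$, giving cycle spectrum $\{3,4\} \cup [3n,4n]$, so $[a,b]$ is a gap.

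The step I expect to require the most care is the lifting correspondence in the necessity proof for $a = 4$: verifying that $G'$ is genuinely simple by case analysis of the possible parallel-edge configurations, and checking that every lifted closed walk is actually a cycle, which follows from the vertex-disjointness of triangles in $G$ together with the freedom to choose the triangle path. The remaining verifications (cubicness, planarity and $2$-connectedness of each construction, and the exact computation of the cycle spectra) are routine.
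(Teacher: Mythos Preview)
Your proposal is correct and, for the most part, follows the same path as the paper: Euler's formula for the $a=3$ bound, a girth-5 example for $[3,4]$, truncation for the $[4,9]$ example, and a cyclic chain of $K_4-e$ gadgets for $a\geq 5$. (Your last construction is in fact isomorphic to the paper's, which describes the same graph as a $3k$-cycle with $k$ extra vertices each joined to three consecutive cycle vertices; the four vertices $u_i,v_{3i-2},v_{3i-1},v_{3i}$ form exactly a copy of $K_4$ minus the edge $v_{3i-2}v_{3i}$.) The one substantive difference is the necessity argument for $a=4$: where the paper simply asserts that ``a simple discharging argument shows $\C(G)\cap[4,10]\neq\emptyset$'' without giving details, you instead contract the vertex-disjoint triangles and lift a short cycle from the quotient graph. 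This is a clean explicit alternative that avoids discharging entirely. Two small points you will want to include among the ``routine verifications'': first, that $G'$ is bridgeless (hence, being cubic, 2-connected), so that its face of length at most $5$ really is bounded by a cycle; second, that the degenerate case $t=0$, $\ell'=3$ cannot occur, since $C'$ would then be a triangle in $G$ on three vertices you declared to be non-triangle vertices. Both fall squarely under the details you already flagged.
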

\begin{proof}
Every 2-connected cubic plane graph contains a face of length 3, 4, or 5.
Thus, if $[3,b]$ is a gap of some 2-connected cubic planar graph, then $b\in \{3,4\}$. The interval $[3,4]$ is a gap of every cubic planar graph of girth 5. If $G$ is a cubic planar graph of girth 5, then replacing every vertex of $G$ with a triangle yields a graph $G'$ with $\C(G')\cap [4,9] = \emptyset$. Thus, $[4,9]$ is a gap of some 2-connected cubic planar graph. A simple discharging argument shows that $\C (G) \cap [4,10] \neq \emptyset$ for every 2-connected cubic planar graph $G$.\\
Now assume $a\geq 5$ and let $C = v_1v_2\ldots v_{3k}v_1$ be a cycle on $3k$ vertices where $3k>b$. Let $G$ be the graph consisting of~$C$ and $k$ vertices $u_1,\ldots ,u_k$ such that $u_i$ is joined to $v_{3i-2}$, $v_{3i-1}$, and $v_{3i}$ for $i\in \{1,\ldots ,k\}$. It is easy to see that $G$ is a Hamiltonian 2-connected cubic planar graph and the only cycles of length less than $3k$ have length 3 or 4. Thus, $\C (G) \cap [a,b] = \emptyset$ and $[a,b]$ is a gap of $G$.
\end{proof}

Proposition~\ref{prop} completely characterizes the intervals which are gaps of 2-connected cubic planar graphs. 
For 3-connected cubic planar graphs the characterization of gaps appears to be more difficult. In Section 3 we construct 3-connected cubic planar graphs for which $[k,2k+1]$ is a gap (with $k$ even). The main theorem of this paper shows that gaps in 3-connected cubic planar graphs cannot be much larger than that.

\begin{theorem}\label{thm:main}
If $k\in \N$ and $G$ is a 3-connected cubic planar graph of circumference at least $k$, then $\C (G) \cap [k,2k+9]\neq \emptyset$.
\end{theorem}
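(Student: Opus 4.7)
My plan is to assume, for contradiction, that $G$ is a 3-connected cubic planar graph with circumference at least $k$ but no cycle of length in $[k,2k+9]$. Let $C$ be a cycle of $G$ of minimum length $\ell$ subject to $\ell\ge k$; then $\ell\ge 2k+10$, and every cycle of $G$ has length strictly less than $k$ or at least $\ell$. A short case analysis rules out chords of $C$: a chord at cyclic distance $d\in[2,\ell/2]$ would give two shorter cycles of lengths $d+1$ and $\ell-d+1$, and the constraints that each lie outside $[k,2k+9]$ are jointly incompatible for $\ell\ge 2k+10$.

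Next I would derive ear constraints for $C$. For an ear $P$ with endpoints in $V(C)$ at shorter cyclic distance $d$, the cycle $P\cup(\text{longer arc})$ has length $|P|+\ell-d\ge \ell/2+1>k$, so its length is at least $\ell$; this forces $|P|\ge d$. The cycle $P\cup(\text{shorter arc})$ has length $|P|+d$, which must likewise be $<k$ or $\ge\ell$, yielding the dichotomy: either the \emph{short case} $|P|<k-d$ (so $d<k/2$ and $|P|<k$), or the \emph{long case} $|P|\ge\ell-d$.

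The heart of the argument is a planar face-growing step. I fix a plane embedding of $G$, let $D$ be one of the two open disks bounded by $C$, and call a face of $G$ contained in $D$ \emph{small} (length $<k$) or \emph{big} (length $\ge\ell$). If every face of $D$ is big, then Euler's formula applied to $\overline{D}$, combined with the cubic condition, forces $D$ to have no interior vertex (so $C$ bounds a single face of length $\ell$); in that event I apply the argument on the other disk instead. So I may assume $D$ contains a small face $F_1$. Starting from $R=F_1$, I iteratively add to $R$ an adjacent small face $F\subseteq D$ such that $F$ shares with the current boundary $\partial R$ a single contiguous arc of $s\ge 1$ edges; this keeps $R$ a simply connected union of small faces, so $\partial R$ is always a single cycle of $G$. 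At each step the boundary length changes by $|F|-2s\le (k-1)-2=k-3$. Since $|\partial R|=|F_1|\le k-1<k$ initially, the first step at which $|\partial R|\ge k$ yields $|\partial R|\le (k-1)+(k-3)=2k-4$; such a cycle, of length in $[k,2k-4]\subseteq[k,2k+9]$, contradicts our assumption.

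The main obstacle will be to show that the growth can actually be driven to produce $|\partial R|\ge k$. If the process halts with $|\partial R|<k$ and no valid extension, every small face of $D$ adjacent to $\partial R$ must share two or more disjoint arcs with $\partial R$, enclosing some subregion of $D$. I plan to handle this by combining the ear dichotomy with the fact that every bridge of $C$ has at least three attachments (a consequence of 3-connectedness of $G$): either an enclosed subregion is itself bounded by a cycle of length in $[k,2k+9]$, or the growth can be re-routed through the enclosure. Ruling out intricate nestings that pin $|\partial R|$ below $k$ throughout is the most delicate part of the argument, and it is precisely here that the additional slack between the optimal value near $2k+1$ (given by the constructions of Section~3) and the bound $2k+9$ enters.
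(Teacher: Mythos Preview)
Your proposal has a genuine gap. The face-growing step shows that, whenever a small face $F\subseteq D$ can be adjoined along a single arc of $s\ge 1$ edges, the boundary length changes by $|F|-2s\le k-3$, so $|\partial R|$ cannot jump from below $k$ to at least $2k+10$ in one step. But you never establish that such an $F$ is available while $|\partial R|<k$. Two concrete obstructions remain:
\begin{itemize}
\item every face of $D$ adjacent to $\partial R$ from outside $R$ may be big (length $\ge \ell$), so there is no small face to adjoin at all --- a case your write-up does not even mention;
\item every adjacent small face may meet $\partial R$ in two or more arcs.
\end{itemize}
You name the second obstruction and promise to resolve it via the ear dichotomy and the fact that bridges of $C$ have at least three attachments, but those are statements about $C$, not about the possibly unrelated short cycle $\partial R$ deep inside $D$; you give no mechanism for turning them into either a continuation of the growth or an alternative cycle in $[k,2k+9]$. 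Note too that the unobstructed growth already delivers a cycle in $[k,2k-4]$, which is stronger than what the constructions of Section~3 allow (they realise gaps $[k,2k+1]$ for even $k$). Hence essentially \emph{all} of the content of the theorem must live in the stuck case, and that is precisely the part you leave as a plan rather than an argument.

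The paper's proof avoids local growth entirely and is quite different in spirit. After producing a long facial cycle (Lemma~\ref{lem:longface}), it performs a \emph{global} reduction (Lemma~\ref{lem:reduction}): iteratively delete the common edges of any two adjacent short faces and keep the component containing the long face, arriving at a $2$-connected subgraph $G'$ in which no two short faces meet and every $2$-edge-cut separates a short face from a long one. Suppressing degree-$2$ vertices yields a cubic planar graph $H$; with $x$, $y$ the numbers of short and long faces and $n=|V(H)|$, Euler's formula gives $x+y=n/2+2$, pairwise disjointness of short faces gives $n\ge 3x$, and a double count of $\sum_{F\in Y}\ell(F)$ (at most $3n+(k-4)x$ from above, at least $(2k+10)y$ from below) forces $(k+2)n<3(k+2)x$, contradicting $n\ge 3x$. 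The constant $9$ emerges cleanly from this inequality rather than from case analysis.
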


We make the following conjecture which states that the graphs constructed in Section 3 have the largest possible gaps among all 3-connected cubic planar graphs.

\begin{conjecture}
If $k\in \N$ with $k\geq 2$ and $G$ is a 3-connected cubic planar graph of circumference at least $k$, then $ \C (G) \cap [k,2k+2]\neq \emptyset$.
\end{conjecture}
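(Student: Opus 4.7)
The plan is to tighten the proof of Theorem~\ref{thm:main} so that the slack above $2k$ shrinks from $+9$ to $+2$, which by the constructions of Section~3 cannot be reduced any further. Throughout, let $C$ be a shortest cycle of $G$ of length at least $k$ and write $\ell = |C|$; we may assume $\ell \geq 2k+3$, for otherwise $\ell \in [k,2k+2]$ and we are done. Since $G$ is cubic and 3-connected, every vertex of $C$ is incident to exactly one edge leaving $C$ in the plane embedding, and each of the two open disks bounded by $C$ contains at least one bridge; by 3-connectedness each such bridge attaches to $C$ at at least three vertices unless it is a single chord.

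I would then analyse the two cycles created by any path $P$ internal to a bridge with endpoints $u,v$ on $C$ and $p$ edges. The arcs of $C$ between $u$ and $v$ have lengths $a$ and $b=\ell-a$, so $P$ together with either arc forms a cycle of length $p+a$ or $p+b$. The hypothesis $\C(G)\cap [k,2k+2] = \emptyset$ combined with the minimality of $\ell$ forces each of these lengths to lie in $[3,k-1]$ or in $[\ell,\infty)$. A short calculation rules out the case in which both lie in $[3,k-1]$ (it would give $2p+\ell<2k$, hence $p<0$), so either (a) both cycles have length at least $\ell$, forcing $p\geq \lceil \ell/2 \rceil \geq k+2$, or (b) one cycle has length less than $k$ and the other at least $\ell$, forcing $a \leq k-p-1$. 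I would iterate this analysis on sub-bridges of the bridges produced by such shortcuts, using the planar embedding to control how bridges on opposite sides of $C$ interact, and combine with 3-connectedness to force a cycle of length in $[k,2k+2]$.

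The main obstacle is that the extremal graphs of Section~3 realise gaps of the form $[k,2k+1]$, so every step of the case analysis must be essentially sharp; any slack in an intermediate inequality matches the Section~3 construction. In particular, the ``nested'' configurations in which a bridge path of length close to $\ell/2$ is itself traversed by further shortcuts, and the borderline case $\ell = 2k+3$ combined with a chord splitting $C$ into arcs of lengths close to $k-2$ and $k+5$, appear to leave essentially no room to produce a cycle of length exactly $2k+2$ without an explicit structural matching against the Section~3 graphs. Resolving these near-extremal cases --- presumably by showing that any 3-connected cubic planar graph deviating from the Section~3 construction must contain an additional cycle length in the target interval --- is, I expect, the technical heart of the conjecture and the reason it is stated here as a conjecture rather than proved.
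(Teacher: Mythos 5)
This statement is an open conjecture in the paper --- the author proves only the weaker bound $[k,2k+9]$ (Theorem~\ref{thm:main}) and explicitly leaves $[k,2k+2]$ unproved --- and your proposal does not close it. The opening analysis of a single bridge path $P$ is correct as far as it goes: the case where both cycles $P\cup$(arc) are short is indeed impossible, and the dichotomy between case (a) ($p\geq k+2$) and case (b) ($a\leq k-p-1$) is sound. But everything after that is a program rather than a proof. ``Iterate this analysis on sub-bridges\dots and combine with 3-connectedness to force a cycle of length in $[k,2k+2]$'' is precisely the step that would need to be carried out, and you concede in your final paragraph that the near-extremal configurations ``appear to leave essentially no room'' and that resolving them ``is, I expect, the technical heart of the conjecture.'' An argument that ends by identifying the hard cases and declaring them unresolved is not a proof; no contradiction is ever derived from the assumption $\C(G)\cap[k,2k+2]=\emptyset$. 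There is also a small factual slip early on: it is not true that each of the two open disks bounded by a shortest long cycle $C$ must contain a bridge --- $C$ may well be a facial cycle (indeed, Lemma~\ref{lem:longface} shows that in the hypothetical counterexample there \emph{is} a long facial cycle), in which case one side of $C$ is empty and your bridge dichotomy gives no information on that side.

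It is also worth noting that your local, bridge-by-bridge approach is structurally quite different from the method the paper uses to get even the weaker constant $+9$: there the argument is global, passing to a subgraph $G'$ in which no two short faces are adjacent (Lemma~\ref{lem:reduction}), suppressing degree-2 vertices, and running a face-counting argument against Euler's formula. Before attempting to sharpen $+9$ to $+2$ by local shortcut analysis, you would need to demonstrate that your approach can recover \emph{some} bound of the form $2k+c$ at all; as written it does not, since the interaction between bridges on the same side of $C$ (nested and crossing sub-bridges, and the accumulation of the ``$a\leq k-p-1$'' shortcuts) is never controlled. The counting approach of the paper loses exactly in the constants you would need to win, and your proposal does not supply the new idea required to beat it.
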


It is not clear whether 3-connected planar graphs can have larger gaps in their cycle spectrum compared to 3-connected cubic planar graphs. We conjecture that a similar result to Theorem~\ref{thm:main} holds for this more general class of graphs.

\begin{conjecture}
There exists $c\in \N$ such that $ \C (G) \cap [k,2k+c]\neq \emptyset$ for every $k\in \N$ and 3-connected planar graph $G$ of circumference at least $k$.
\end{conjecture}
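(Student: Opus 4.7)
The natural strategy is to reduce the general case to Theorem~\ref{thm:main} via the \emph{cubic expansion} of $G$: for each vertex $v$ of degree $d_v$, replace $v$ by a gadget cycle $C_v$ of length $d_v$, reattaching the edges incident to $v$ to the distinct vertices of $C_v$ in the cyclic order prescribed by the planar embedding. This standard truncation produces a 3-connected cubic planar graph $G'$, and since any cycle of $G$ lifts to a cycle of $G'$ of at least the same length, $G'$ has circumference at least $k$. Theorem~\ref{thm:main} then yields a cycle $C'$ of $G'$ of length in $[k,2k+9]$.

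Every cycle of $G'$ is either (a) a single gadget cycle $C_v$, or (b) projects, by contracting each gadget back to its original vertex, to a cycle $C$ of $G$ of length at most that of $C'$. In case (b), if the length of $C$ is at least $k$ we are done with $c=9$; otherwise $C$ is a short cycle in $G$, and the excess length of $C'$ over $C$ is accounted for by ``angular detours'' that $C'$ takes inside gadgets corresponding to high-degree vertices of $G$ visited by $C$.

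In case (a), $v$ is a vertex of $G$ with $d_v\in[k,2k+9]$. Let $\ell_1,\ldots,\ell_{d_v}$ denote the lengths of the faces incident to $v$. By 3-connectivity, deleting $v$ from the planar embedding yields a new face bounded by a simple cycle $H$ of length $\sum_{i=1}^{d_v}(\ell_i-2)$, and cycles of $G$ through $v$ formed by two edges at $v$ together with an arc of $H$ have length $2+\sum_{s\in I}(\ell_s-2)$ for each contiguous arc $I$ of face indices. A cumulative-sum argument shows that, provided every $\ell_i\leq k+2$, some such cycle has length in $[k,2k]$. The remaining subcase is the existence of a face $f_i$ with $\ell_i>k+2$: if $\ell_i\leq 2k+c$ the face itself is a cycle in the target interval, so the troublesome subcase is $\ell_i>2k+c$.

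Two stubborn subcases therefore remain: (i) case (b) with a short projection $C$, and (ii) case (a) with a face $F$ of length strictly greater than $2k+c$. Both exhibit a ``locally long'' structure in $G$. To resolve them I would exploit 3-connectivity: every vertex of a long face $F$ (respectively, every high-degree vertex on the short cycle $C$) has at least one edge leaving $F$ (respectively, at least one neighbor off $C$), and Menger-type arguments should supply internally disjoint paths that cut $\partial F$, or insert a detour into $C$, yielding a cycle of controllable length. The main obstacle, and the reason the statement remains only conjectural, is securing a \emph{uniform} constant $c$: one must preclude configurations in which every potential shortcut of $\partial F$ or every potential extension of $C$ either falls short of $k$ or overshoots $2k+c$, and doing so appears to demand a substantially more delicate discharging or averaging argument than the one used in the cubic case of Theorem~\ref{thm:main}.
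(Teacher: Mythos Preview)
This statement is presented in the paper as an open \emph{conjecture}, not a theorem; the paper offers no proof of it. So there is nothing to compare your proposal against on the paper's side, and you yourself acknowledge in the final paragraph that the argument is incomplete (``the statement remains only conjectural'').

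That said, it is worth being explicit about where the truncation strategy breaks down. Your case (a) analysis is essentially sound: when the cycle $C'$ is a gadget cycle $C_v$, the link-cycle/partial-sum argument really does produce a cycle of $G$ with length in $[k,2k]$ \emph{provided} every face at $v$ has length at most $k+2$, and a face of length in $(k+2,\,2k+c]$ finishes the job directly. The genuine obstruction in case (a) is a vertex of degree in $[k,2k+9]$ incident with a face of length exceeding $2k+c$; this is not a pathology one can discharge away locally, because such a vertex can sit on the boundary of an arbitrarily long face with all remaining incident faces triangles. Your case (b) is more seriously incomplete: when $C'$ projects to a short cycle $C$ of $G$, the ``angular detours'' live entirely inside gadgets and do not correspond to any edges of $G$, so there is no evident way to convert the excess length of $C'$ into extra length for a cycle of $G$. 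In both stubborn subcases the difficulty is the same structural phenomenon---a long face adjacent to many short faces or high-degree vertices---and nothing in the truncation framework bounds how long that face can be relative to $k$. In short, the reduction to Theorem~\ref{thm:main} is natural but does not by itself yield a uniform constant $c$; resolving the conjecture appears to require a direct argument in the non-cubic setting, which is exactly why the paper leaves it open.
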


\section{Proof of Theorem~\ref{thm:main}}

We prove Theorem~\ref{thm:main} by contradiction. Suppose there exists a 3-connected cubic plane graph $G$ of circumference at least $k$ such that $\C(G)\cap [k,2k+9] = \emptyset$. Now every cycle in $G$ is either \emph{short} (shorter than $k$) or \emph{long} (longer than $2k+9$). 
Since the circumference of $G$ is at least $k$, there is at least one long cycle in $G$. The following lemma shows that $G$ also contains a long facial cycle.

\begin{lemma}\label{lem:longface}
Let $G$ be a 3-connected cubic plane graph and $k,c\in \N$. If the circumference of $G$ is at least $k$, then $\C (G)\cap [k,2k+c] \neq \emptyset$ or $G$ has a facial cycle of length greater than $2k+c$.
\end{lemma}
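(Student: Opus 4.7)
The plan is to prove the contrapositive by a face-by-face ``shelling'' argument. Suppose $\C(G)\cap[k,2k+c]=\emptyset$ and, for contradiction, that every facial cycle of $G$ has length at most $2k+c$. Combining both assumptions, every facial cycle in fact has length strictly less than $k$. Since the circumference is at least $k$ and no cycle has length in the gap, there is a cycle $C$ of $G$ of length greater than $2k+c$; fix a planar embedding and let $D$ be the closure of the bounded region of the plane cut out by $C$. Because $C$ is longer than any facial cycle, $C$ is not itself a face, so $D$ decomposes into finitely many faces $F_1,\ldots,F_m$ of $G$, each of length less than $k$.

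The next step is to order the faces as $F_{i_1},\ldots,F_{i_m}$ so that for every $j\ge 1$ the region $R_j:=F_{i_1}\cup\cdots\cup F_{i_j}$ is a closed topological disk whose boundary $C_j$ is a simple cycle of $G$, and such that $F_{i_{j+1}}$ meets $R_j$ along a single nonempty path. Writing $\ell_j:=|C_j|$, this gives $\ell_1<k$ and $\ell_m=|C|>2k+c$. If $F_{i_{j+1}}$ is glued to $R_j$ along a path of $p$ edges with $1\le p\le |F_{i_{j+1}}|-1$, then
\[
\ell_{j+1}=\ell_j+|F_{i_{j+1}}|-2p, \qquad |\ell_{j+1}-\ell_j|\le |F_{i_{j+1}}|-2\le k-3<k.
\]
Going from a value less than $k$ to one greater than $2k+c$ would require a single increment of at least $k+c+2$, which exceeds the bound $k-3$ on a single step. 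Hence the sequence $(\ell_j)$ must take some value in $[k,2k+c]$, and the corresponding $C_j$ is then a cycle of $G$ of length in the gap, the desired contradiction.

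That each $C_j$ is a simple cycle is immediate in a cubic graph: a vertex occurring twice on the boundary walk of $R_j$ would be incident to at least four boundary edges, while every vertex of $G$ has degree three. The substantive step, and the main obstacle I anticipate, is producing the shelling itself -- i.e., showing that whenever $R_j\subsetneq D$ one can find a face $F\in D\setminus R_j$ whose intersection with $C_j$ is a single nonempty path. I would handle this by an innermost-pocket induction: pick any face $F'$ of $D\setminus R_j$ adjacent to $C_j$; if $F'\cap C_j$ is a single path, add $F'$ and continue; otherwise two consecutive arcs of $F'\cap C_j$ together with a sub-arc of $\partial F'$ bound a strictly smaller sub-disk of $D\setminus R_j$, into which we recurse to find an eligible face. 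The recursion terminates because each step strictly decreases the enclosed face count.
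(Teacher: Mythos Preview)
Your argument is correct and shares its core with the paper's proof, but the paper streamlines the main step. Both proofs reduce to the situation where every facial cycle has length below $k$ while some cycle $C$ has length above $2k+c$, and both rely on the fact that inside any non-facial cycle there is a face whose boundary meets that cycle in a single path (your innermost-pocket step; the paper's ``there exists $f\in E(C)$ such that $D_f$ is a cycle''). The difference is that the paper avoids building a full shelling: it picks $C$ to be a long cycle whose interior contains the \emph{fewest} faces, peels off one such face $F$ to obtain a cycle $D_f$ with $|E(D_f)|\ge |E(C)|-|E(F)|>k$, hence $|E(D_f)|>2k+c$ by the gap hypothesis; but $D_f$ encloses strictly fewer faces, contradicting minimality. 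This single peel replaces your entire sequence $(\ell_j)_{j=1}^{m}$ and the discrete intermediate-value step, and it spares you from checking that the shelling can be continued all the way to $R_m=D$. Your pocket recursion is the right idea for producing the needed face, though note that as written you recurse with respect to the pocket boundary $A\cup Q$ rather than $C_j$; a line is still needed to explain why the face eventually found meets $C_j$ itself (not merely the pocket boundary) in a single nonempty path---true in a cubic graph, but worth stating.
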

\begin{proof}
Suppose $\C (G)\cap [k,2k+c] = \emptyset$ and every facial cycle of $G$ has length less than $k$. Let $C$ be a cycle of length greater than $2k+c$ for which the number of faces in its interior is minimal. Since every face has length less than $k$, there are at least three faces in the interior of $C$. For every edge $e$ of $C$, let $C_e$ denote the facial cycle in the interior of $C$ which is incident with $e$. Let $D_e$ denote the symmetric difference of $C$ and $C_e$. Note that $D_e$ is a union of cycles for every $e\in E(C)$. It is easy to see that there exists an edge $f\in E(C)$ such that $D_f$ is a cycle.
Since every facial cycle has length less than $k$, we have $|E(D_f)|\geq |E(C)|-k$. Since $|E(C)|> 2k+c$ and $\C (G)\cap [k,2k+c] = \emptyset$, we have $|E(D_f)|>2k+c$. However, $D_f$ contains fewer faces than $C$ in its interior, contradicting our choice of $C$.
\end{proof}

Next we show that we can remove edges between short faces of $G$ to obtain a 2-connected subgraph $G'$ where no two short faces are adjacent. Moreover, the subgraph $G'$ also contains a long facial cycle and each edge of a 2-edge-cut in $G'$ is incident with both a short face and a long face.

\begin{lemma}\label{lem:reduction}
Let $G$ be a 3-connected cubic plane graph and $k\in \N$. If $\C (G) \cap [k,2k]= \emptyset$ and $G$ contains a facial cycle $C$ of length at least $2k+1$, then $G$ contains a 2-connected subgraph $G'$ such that
\begin{description}
    \item[(A)] no two facial cycles of $G'$ of length less than $k$ intersect,
    \item[(B)] $E(C)\subseteq E(G')$,
    \item[(C)] and every edge of $G'$ which is part of a 2-edge-cut is incident with a face longer than $2k$ and a face shorter than $k$.
\end{description}
\end{lemma}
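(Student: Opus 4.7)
The plan is to construct $G'$ by iteratively deleting edges that lie strictly between two short faces. I would initialize $G' := G$ and, while there exists an edge $e$ of $G'$ with $e \notin E(C)$, both incident faces of $G'$ of length less than $k$, and $G' - e$ still 2-connected, replace $G'$ by $G' - e$. When no such $e$ exists, halt. The key invariant will be that removing such an edge $e$ merges its two short incident faces $F_1, F_2$ into a new face of length $|F_1| + |F_2| - 2 < 2k - 2$; since this merged face is a cycle of $G$ and $\C(G) \cap [k,2k] = \emptyset$, its length is in fact less than $k$, so it is again short. Thus the notion of a short face is stable, $G'$ remains 2-connected, and $E(C) \subseteq E(G')$ throughout, giving (B).

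For (A), I would first establish the following sub-claim, which uses that $G'$ inherits maximum degree $3$ from $G$: if $e \in E(G')$ and $G' - e$ is not 2-connected, then $e$ lies in some 2-edge-cut $\{e, f\}$ of $G'$. The proof analyzes a cut vertex $w$ of $G' - e$: first, $w$ cannot be an endpoint of $e$, for otherwise $G' - e - w = G' - w$ would be connected by the 2-connectedness of $G'$. Thus $w$ has degree at most $3$ in $G'$, and since the two components $X, Y$ of $G' - e - w$ must each be joined to $w$ by at least one edge (else $\{e\}$ would be a bridge of $G'$), at least one of them (say $X$) is joined to $w$ by exactly one edge $f$, and then $\{e, f\}$ is a 2-edge-cut. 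Granting (C), property (A) follows: if two short faces $F_1, F_2$ share a vertex $v$, the cubic structure at $v$ forces them either to share an edge $e$ (when $\deg_{G'}(v) = 3$) or the two edges at $v$ to form a 2-edge-cut separating $v$ from the rest (when $\deg_{G'}(v) = 2$), whose bounded faces are $F_1, F_2$ --- which immediately contradicts (C). In the shared-edge case, $e \notin E(C)$ since $C$ is long, and by the halting condition together with the sub-claim, $\{e,f\}$ is a 2-edge-cut bounded by $F_1, F_2$ both short, again contradicting (C).

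It remains to argue (C). Given a 2-edge-cut $\{e, f\}$ of $G'$ with common bounded faces $F_1, F_2$, let $A, B$ be the two components of $G' - \{e, f\}$. Since $C$ is a cycle and cannot cross a 2-edge-cut in just one edge, $E(C) \subseteq E(A)$ or $E(C) \subseteq E(B)$; without loss of generality the former. To rule out $F_1, F_2$ both short: the outer cycles $\partial A, \partial B$ of $A, B$ facing $F_1 \cup F_2$ satisfy $|\partial A| + |\partial B| = |F_1| + |F_2| - 4 < 2k - 4$, and each being a cycle of $G$ avoiding $[k, 2k]$ has length less than $k$, so we can shrink the $B$-side by replacing $B$ with just $\partial B$, yielding a strictly smaller 2-connected subgraph of $G$ containing $E(C)$. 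A symmetric shrinking argument rules out $F_1, F_2$ both being long. To make these contradictions bite, I would take the starting graph of the iterative procedure to be a minimum-edge 2-connected subgraph of $G$ containing $E(C)$ rather than $G$ itself.

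The main obstacle is making the shrinking step in the proof of (C) go through in every configuration. In particular, when $B$ is already just a cycle (so $B = \partial B$ and the naive replacement saves nothing), or when removing $V(B)$ leaves $A$ with a cut vertex, the argument must be patched. Handling these degenerate cases should require a careful structural analysis of 2-edge-cuts in a minimum-edge $G'$ and possibly a slightly more refined extremal choice than simply minimizing $|E(G')|$.
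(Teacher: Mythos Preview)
Your single-edge deletion with the 2-connectivity invariant is sound as far as it goes: when $G'-e$ is 2-connected the merged face really is a cycle of length $|F_1|+|F_2|-2<2k$, hence short by the gap hypothesis. But the proof of (C) is where the plan breaks, and not just in the degenerate cases you flag. For the ``both long'' case, a symmetric shrinking argument cannot work: the inequality $|\partial A|+|\partial B|=|F_1|+|F_2|-4$ now gives a lower bound exceeding $4k$, not an upper bound, so there is nothing forcing $\partial A,\partial B$ to be short or even to be single cycles. The right observation (which your setup actually supports, provided you start from $G$ rather than a minimal subgraph) is that your procedure only ever deletes edges between two short faces, so every long face of $G'$ is already a face of $G$; a 2-edge-cut bounded by two long faces would then be a 2-edge-cut of $G$, contradicting 3-connectivity. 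Switching to a minimum-edge starting graph, as you suggest at the end, destroys exactly this property and is a step in the wrong direction.

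For the ``both short'' case your worry is justified and the obstruction is genuine. Once degree-2 vertices appear, you can reach a configuration where a vertex $b$ has degree~2 and both incident faces are short; the 2-edge-cut at $b$ then has $B=\{b\}$, no shrinking is possible, and (A) and (C) both fail. The paper avoids this by deleting, at each step, \emph{all} edges in $E(F_1)\cap E(F_2)$ simultaneously. This may disconnect the graph, but it never creates a cut-edge (any edge with the new merged face on both sides was in $F_1\cap F_2$ and has been removed), so one simply keeps the component containing $C$. Each cycle in the symmetric difference of the two short boundaries is still a cycle of $G$ of length below $2k$, hence short. With this stronger deletion step (A) is immediate by construction, and (C) follows from (A) together with the ``long faces are faces of $G$'' observation above---no minimality or shrinking is needed.
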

\begin{proof}
We call a cycle \emph{short} if it has length less than $k$ and \emph{long} if it has length greater than $2k$. We call a face short (long) if its boundary is a short (long) cycle. Since $\C (G) \cap [k,2k]= \emptyset$, every cycle of $G$ is short or long.\\
We construct a sequence of 2-connected graphs $G_0,G_1,\ldots $ by successively gluing adjacent short faces together. Let $G_0=G$ and suppose we have constructed $G_i$. If $G_i$ does not contain two adjacent short faces, then we set $G'=G_i$ and stop. Otherwise, let $F_1$ and $F_2$ be two adjacent short faces in $G_i$. We form a graph $H_i$ by deleting the edges of $F_1\cap F_2$ and we also delete any resulting isolated vertices. Now $H_i$ has a face~$F$ whose boundary is the symmetric difference of $F_1$ and $F_2$. This boundary might consist of several cycles, but each cycle in the boundary is short. This operation cannot create cut-edges, but it can disconnect the graph, in which case $F$ is incident with each component. If $H_i$ is disconnected, then we choose the component containing $C$ as $G_{i+1}$, otherwise we set $G_{i+1}=H_i$. Note that every long face of $G_{i+1}$ is also a long face of $G_i$.
Clearly $G_{i+1}$ is a proper subgraph of $G_i$, so the sequence terminates.\\
In $G'$ there are no two adjacent short faces, so $G'$ satisfies (A). We have $E(C)\subset E(G_i)$ for each $i$, so $G'$ satisfies (B). Every $G_i$ is 2-connected, so $G'$ is 2-connected. If $e_1,e_2$ is a 2-edge-cut in $G'$, then there are two faces $F_1$ and $F_2$ which are incident with both $e_1$ and $e_2$. Since $G'$ satisfies (A), at least one of these two faces is long. If both $F_1$ and $F_2$ are long, then $F_1$, $F_2$ are also faces of $G$ and $e_1,e_2$ would be a 2-edge-cut in $G$, contradicting 3-connectivity. Thus, every edge of a 2-edge-cut is incident with a short face and a long face, so $G'$ satisfies (C).
\end{proof}

Note that (C) implies that if $C_1$ and $C_2$ are two facial cycles in $G'$ with length greater than $k$, then $|E(C_1)\cap E(C_2)| \leq 1$. We can now prove Theorem~\ref{thm:main} by a counting argument.

\begin{proof}[Proof of Theorem~\ref{thm:main}]
We fix an embedding of $G$ in the plane. Suppose the circumference of $G$ is at least $k$ and $\C (G) \cap [k,2k+9] = \emptyset$.
By Lemma~\ref{lem:longface} we can assume that $G$ has a facial cycle~$C$ with $|E(C)|\geq 2k+10$. We apply Lemma~\ref{lem:reduction} to obtain a 2-connected subgraph $G'$ of $G$ satisfying the conditions (A), (B), and (C). Let $H$ be the graph we obtain from $G'$ by suppressing all vertices of degree 2. The embedding of $G$ in the plane also gives us an embedding of $G'$ and $H$ such that there is a canonical bijection between the faces of $G'$ and $H$. Let $F(G')$ denote the set of faces of $G'$. For $F\in F(G')$ we write $\ell(F)$ for the length of $F$ and $\ell_H(F)$ for the length of the corresponding face in $H$. We define $X = \{F\in F(G'): \ell(F) < k\}$ and $Y = \{F\in F(G'): \ell(F) > 2k+9\}$. Note that $F(G') = X\cup Y$. Let $x = |X|$, $y = |Y|$, and $n=|V(H)|$. 
The graph $H$ is a 2-connected cubic planar graph, so
\begin{align}\label{eq:euler}
    x+y = \frac{n}{2}+2
\end{align}
by Euler's formula. \\
Suppose $H$ contains a facial cycle of length 2. Let $C'$ be the corresponding facial cycle in $G'$, and let $C_1$, $C_2$ denote the two facial cycles in $G'$ that intersect $C'$. Since there exists a 2-edge-cut incident with $C_1$ and $C_2$, by (C) we may assume $|E(C_1)|<k$ and $|E(C_2)| > 2k+9$. By (A), we have $|E(C')| > 2k+9$. Now 
$$2k+9<|E(C')| = |E(C'\cap C_1)| +  |E(C'\cap C_2)| < k+|E(C'\cap C_2)|$$ 
implies $|E(C'\cap C_2)|>k+9$ which contradicts (C). Thus, every facial cycle of $H$ has length at least 3. By (A) the faces in $X$ are pairwise non-adjacent, thus
\begin{align}\label{eq:xn}
   n = |V(H)| \geq \sum_{F\in X}\ell_H(F) \geq 3x
\end{align}
which implies
\begin{align}\label{eq:longinH}
    \sum_{F\in Y}\ell_H(F)  = \sum_{F\in F(G')}\ell_H(F) - \sum_{F\in X}\ell_H(F)
     = 2|E(H)| - \sum_{F\in X}\ell_H(F) 
     \leq 3n-3x\,.
\end{align}
In $G'$, each edge incident with a face in $Y$ is also an edge in $H$ or it is incident with a face in $X$, so by (\ref{eq:longinH}) we have
\begin{align}\label{eq:upper_bound}
    \sum_{F\in Y}\ell(F) & \leq \sum_{F\in Y}\ell_H(F) + \sum_{F\in X}\ell(F) \leq 3(n-x) + (k-1)x = 3n + (k-4)x\,.
\end{align}
Each face in $Y$ has length at least $2k+10$ in $G'$, so by (\ref{eq:euler}) we have
\begin{align}\label{eq:lower_bound}
    \sum_{F\in Y}\ell(F) \geq (2k+10)y = (2k+10)\left(\frac{n}{2}+2-x\right) > (k+5)n-(2k+10)x\,.
\end{align}
Combining (\ref{eq:upper_bound}) and (\ref{eq:lower_bound}) yields
\begin{align*}
    (k+5)n -(2k+10)x < 3n+(k-4)x
\end{align*}
which is equivalent to
\begin{align*}
    (k+2)n < 3(k+2)x\,.
\end{align*}
This implies $n < 3x$ which contradicts (\ref{eq:xn}).
\end{proof}

\section{Graphs with large cycle gaps}

In this section we show that for every $k\geq 2$ the interval $[2k,4k+1]$ is a gap of a 3-connected cubic planar graph and thus the interval $[k,2k+9]$ in Theorem~\ref{thm:main} is close to the optimal bound.

\begin{theorem}
For $k\in \N$ with $k\geq 2$ there exists a 3-connected cubic planar graph $G$ of circumference at least $2k$ such that $\C (G) \cap [2k,4k+1] = \emptyset$.
\end{theorem}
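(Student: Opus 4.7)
The plan is to construct $G$ from a small $3$-connected cubic planar base graph $G_0$ by a vertex-replacement operation, so that short cycles of $G$ lie inside a single replacement ``gadget'' (and hence have length less than $2k$) while every cycle that leaves a gadget is forced to traverse several gadgets and so has length at least $4k+2$.

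For $k=2$ this is realized classically by taking $G$ to be the truncation of the dodecahedron: each vertex is blown up to a triangle, with the three former incident edges attached one-to-one to the three vertices of the triangle. The truncation of a $3$-connected cubic planar graph is cubic, planar, and $3$-connected. Every cycle of $G$ is either one of the $20$ triangular gadgets (length $3$) or is induced by a cycle of the dodecahedron via the connecting edges; since the dodecahedron has girth $5$ and each visited triangle contributes at least one triangle-edge together with one connecting edge, the latter cycles have length at least $2\cdot 5=10$. Thus $\C(G)\subseteq\{3\}\cup[10,\infty)$ and the interval $[4,9]$ is avoided, giving a 3-connected cubic planar graph with circumference at least $10\geq 2k$.

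For $k\geq 3$ the same strategy works with a larger gadget $W_k$: a cubic planar graph with three distinguished degree-$2$ attachment points, every internal cycle of length at most $2k-1$, and every attachment-to-attachment path of length at least $\ell$, where $\ell$ is chosen so that $g(G_0)(\ell+1)\geq 4k+2$. Replacing each vertex of a small $3$-connected cubic planar $G_0$ (for instance $K_4$) by a copy of $W_k$, with its three attachments receiving the three formerly incident edges, yields $G$. Short cycles of $G$ are then internal to a single copy of $W_k$ (length at most $2k-1$); any non-internal cycle visits at least $g(G_0)$ gadget copies and uses in each a path between two attachments of length at least $\ell$, accumulating total length at least $g(G_0)(\ell+1)\geq 4k+2$. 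Infiniteness of the family can then be arranged by taking $G_0$ in an infinite class of small $3$-connected cubic planar graphs, or by iterating the replacement.

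The main obstacle is the explicit construction of the gadget $W_k$ for large $k$. By Euler's formula, no finite cubic planar graph has girth greater than $5$, so the naive recipe ``take a cubic planar graph of girth $2k+1$ and remove one vertex to expose three degree-$2$ attachments'' fails; for $k\geq 3$ one has to combine short-cycle substructure (triangles or pentagons) with long internal detours between the three attachments, being careful not to inadvertently create cycles of intermediate length where two attachments come close inside the gadget. The $3$-connectivity of the assembled $G$ then reduces to checking that no two-vertex cut sits at a gadget boundary, which follows once the gadget is itself internally $3$-connected and its three attachment-to-attachment paths are pairwise internally disjoint; the remaining cycle-analysis is the same short argument as in the dodecahedron case, now carried out with the parameter $\ell$ in place of $g=5$.
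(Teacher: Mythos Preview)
Your $k=2$ case via the truncated dodecahedron is correct. The gap is for $k\geq 3$: you correctly identify that the heart of the matter is constructing a gadget $W_k$ whose internal cycles all have length at most $2k-1$ while every path between two attachment vertices is long, and you correctly observe that the naive high-girth approach is blocked by Euler's formula. But you then stop, saying only that ``one has to combine short-cycle substructure with long internal detours, being careful not to create cycles of intermediate length.'' That is a restatement of the difficulty, not a solution; without an explicit $W_k$ and a verification that no intermediate-length cycle appears where two long detours meet, the argument for $k\geq 3$ is incomplete.

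The paper resolves this not by vertex replacement but by replacing the \emph{edges} of a perfect matching, which lets the gadget have four attachment points split into two pairs separated by a cut-edge. The gadget $H_{k-1}$ is explicit: two disjoint $(2k-1)$-cycles $x_0x_1\cdots x_{2k-2}x_0$ and $y_0y_1\cdots y_{2k-2}y_0$, chords $x_ix_{2k-2-i}$ and $y_iy_{2k-2-i}$ for $1\leq i\leq k-2$, and a single bridge $x_{k-1}y_{k-1}$. The chords triangulate each half, so every internal cycle has length at most $2k-1$, while any path from an $x$-attachment to a $y$-attachment must cross the bridge and hence has length at least $2k-1$. The base graph is not $K_4$ but a specific girth-$5$ cubic planar graph $D_n$ carrying a perfect matching $M$ with the property that no cycle of $D_n$ uses exactly one edge of $M$; combining this with the gadget bounds yields the $4k+2$ lower bound. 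The missing ingredient in your sketch is precisely this chorded-cycle gadget with a bridge, which sidesteps the planar girth obstruction by allowing many short triangles internally while still forcing long through-paths.
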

\begin{proof}
For $k\in \N$, let $H_k$ be the graph consisting of two disjoint cycles $C_1 = x_0x_1\ldots x_{2k}x_0$ and $C_2 = y_0y_1\ldots y_{2k}y_0$ together with chords $x_ix_{2k-i}$ and $y_iy_{2k-i}$ for $i\in \{1,\ldots ,k-1\}$ as well as one edge $x_ky_k$ joining $C_1$ and $C_2$, see Figure~\ref{fig:H_4}.\\
Let $D$ be a cubic graph and $e\in E(D)$. We define $D(e,H_k)$ as the graph obtained from $D$ by replacing $e$ with a copy of $H_k$. To be more precise, if $e=uv$ where $u$ is adjacent to $u_1,u_2,v$ and $v$ is adjacent to $v_1,v_2,u$ in $D$, then $D(e,H_k)$ is obtained from the disjoint union of $D-u-v$ and $H_k$ by adding the edges $u_1x_0$, $u_2x_{2k}$, $v_1y_0$, and $v_2y_{2k}$. If $M$ is a matching in $D$, then we define $D(M,H_k)$ as the graph we obtain by successively replacing the edges in $M$ by $H_k$. Note that if $D$ is a 3-connected cubic planar graph, then so is $D(M,H_k)$. If $M$ is a perfect matching, then $D(M,H_1)$ is the graph where every vertex of $D$ is replaced with a triangle.\\
For $n\in \N$ with $n\geq 4k+2$, let $D_n$ be a graph consisting of two cycles $C_1 = u_1\ldots u_nu_1$, $C_2 = v_1\ldots v_nv_1$ of length $n$ and one cycle $C_3 = w_1w_2\ldots w_{2n}w_1$ of length $2n$ together with edges $v_iw_{2i-1}$ and $u_iw_{2i}$ for every $i\in \{1,\ldots ,n\}$. It is easy to see that $D_n$ is a 3-connected cubic planar graph. Let $M$ denote the perfect matching in $D$ consisting of the edges of the form $v_iw_{2i-1}$ and $u_iw_{2i}$ for $i\in \{1,\ldots ,n\}$. We define $G(n,k)=D_n(M,H_{k-1})$, see Figure~\ref{fig:G(4,3)} for an example. Clearly $G(n,k)$ is a 3-connected cubic planar graph. Since $n\geq 4k+2$, the circumference of $G(n,k)$ is greater than $2k$. It remains to show that $\C(G(n,k))\cap [2k,4k+1] =\emptyset$.\\
Let $C$ be a cycle of $G(n,k)$ and suppose $2k\leq |E(C)| \leq 4k+1$. The circumference of $H_{k-1}$ is $2k-1$, so $C$ is not contained in one of the copies of $H_{k-1}$. Thus, $C$ corresponds to a cycle $C'$ in $D$ (by contracting each block of each copy of $H_{k-1}$ into a vertex) and $|E(C')|\leq |E(C)|$. The shortest cycle in $D$ not containing an edge of $M$ has length $n$, so $C'$ contains at least one edge of $M$. There are no cycles in $D$ containing precisely one edge of $M$, so $|E(C')\cap M|\geq 2$. Note that in $C$ the edges used by $C'$ in $M$ correspond to paths between vertices $x_0$ and $y_0$ in $H_{k-1}$ and these have length at least $2k-1$. In particular $|E(C')\cap M| = 2$. Moreover, $C'$ contains two consecutive edges not belonging to $M$ and these two edges correspond to a path of length 3 in $C$. Since the girth of $D$ is 5, we have $|E(C)|\geq |E(C')|+2(2k-2)+1 \geq 5+4k-3 = 4k+2$, contradicting our choice of $C$.
\end{proof}

\begin{figure}[t]
    \centering
    \includegraphics[scale = 0.25]{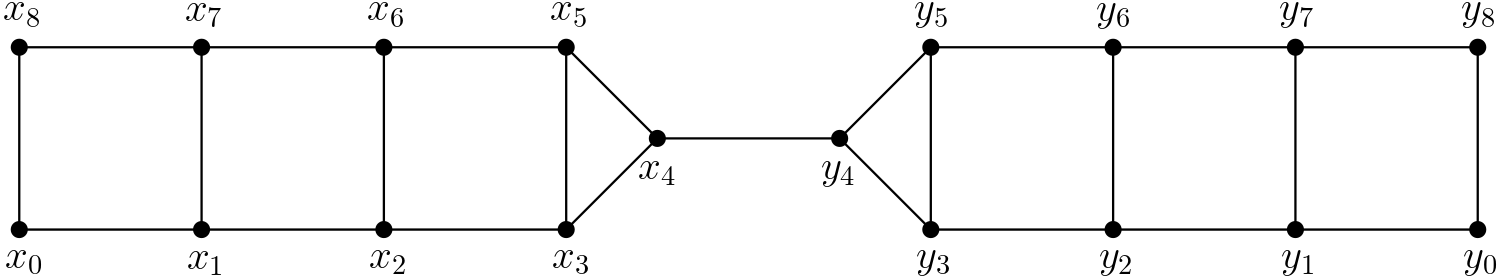}
    \caption{The graph $H_4$}
    \label{fig:H_4}
\end{figure}

\begin{figure}[h]
    \centering
    \includegraphics[scale = 0.45]{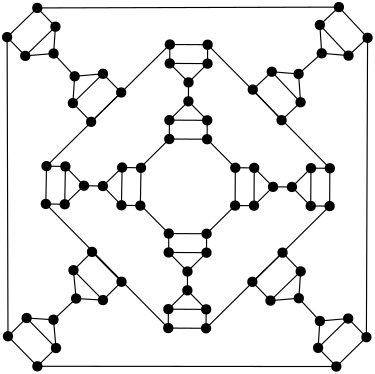}
    \caption{The graph $G(4,3)$}
    \label{fig:G(4,3)}
\end{figure}


\begin{thebibliography}{}

\bibitem{cohen-addad} \textsc{V. Cohen-Addad, M. Hebdige, D. Kr{\'a}l', Z. Li, E. Salgado}, \emph{Steinberg's Conjecture is false}, J. Combin. Theory Ser. B 122 (2017), 452-456.

\bibitem{erdos2} \textsc{P. Erd\H{o}s}, \emph{Some of my favorite problems in various branches of combinatorics}, Matematiche (Catania) 47 (1992), 231-240.

\bibitem{erdos} \textsc{P. Erd\H{o}s}, \emph{Some old and new problems in various branches of combinatorics}, Discrete Math. 165 (1997), 227-231.

\bibitem{gao} \textsc{J. Gao, Q. Huo, C. Liu, J. Ma}, \emph{A unified proof of conjectures on cycle lengths in graphs}, arXiv:1904.08126v2 [math.CO] 18 Apr 2019.

\bibitem{grotzsch} \textsc{H. Gr{\"o}tzsch}, \emph{Ein Dreifarbensatz f{\"u}r dreikreisfreie Netze auf der Kugel}, Wiss. Z. Martin-Luther-Univ. Halle-Wittenberg Math.-Natur. Reihe 8 (1959), 109-120.

\bibitem{lyngsie} \textsc{K.S. Lyngsie, M. Merker}, \emph{Cycle lengths modulo $k$ in large 3-connected cubic graphs}, arXiv:1904.05076 [math.CO] 10 Apr 2019.

\bibitem{milans} \textsc{K.G. Milans, F. Pfender, D. Rautenbach, F. Regen, D.B. West}, \emph{Cycle spectra of Hamiltonian graphs}, J. Combin. Theory Ser. B 102 (2012), 869-874.

\bibitem{sudakov} \textsc{B. Sudakov,  J. Verstraëte}, \emph{Cycle lengths in sparse graphs}, Combinatorica 28 (2008), 357-372.

\bibitem{thomassen} \textsc{C. Thomassen}, \emph{Graph Decomposition with Applications to Subdivisions and Path Systems Modulo $k$}, J. Graph Theory 7 (1983), 261-271.

\end{thebibliography}
\end{document}